\documentclass[11pt,twoside, leqno]{article}
\usepackage[left=2.25cm,right=1.97cm,top=2.25cm,bottom=2.25cm]{geometry}
%����ҳ�߾ࣺ�� �� �� �µı߾�
\usepackage{amssymb}
\usepackage{mathrsfs}
\usepackage{array}
\usepackage{diagbox}
\usepackage{graphicx}
\usepackage{epstopdf}
\usepackage{subfigure}
\usepackage{longtable}
\usepackage{amsfonts}
\usepackage{latexsym}
\usepackage{epsfig}
\usepackage{listings}
\usepackage{float}
\usepackage{booktabs}
\usepackage{amsmath}
\usepackage{multirow}
\usepackage{amsthm}
\usepackage{color}

\newtheorem{Theorem}{\bf Theorem}[section]
\newtheorem{Lemma}{\bf Lemma}[section]

\newtheorem{Definition}{\bf Definition}[section]
\newtheorem{Proposition}{\bf Proposition}[section]

\usepackage[linesnumbered,ruled,vlined]{algorithm2e}
\usepackage{algpseudocode}
\usepackage{amsmath}
\usepackage{titlesec}

\begin{document}
	\title{Can TM system form an unconditional basis for Banach spaces?}
	\author{Haibo Yang, Chitin Hon, Qixiang Yang and Tao Qian\footnote{Corresponding author}}
	\date{\today}
	\maketitle
	\noindent \textbf{Abstract:}

The research on the algorithm of analytic signal has received much attention for a long time.
Takenaka-Malmquist (TM) system   was introduced to consider analytic functions in 1925.
If TM system satisfies hyperbolic inseparability condition,
then it is an orthogonal basis.
It can form  unconditional basis for Hilbert space $\mathbb{H}^{2}(D)$
and Schauder basis for Banach space $\mathbb{H}^{p}(D)(1 < p < \infty)$.
In characterizing a function space,
a necessary condition is whether the basis is unconditional.
But since the introduction of TM systems in 1925,
to the best of our knowledge,
no one has proved the existence of a TM system capable of forming an unconditional basis for Banach space $\mathbb{H}^{p}(D) (p \neq 2)$.
TM system has a simple and intuitive analytical structure.
Hence it is applied also to the learning algorithms
and systematically developed to the reproducing kernel Hilbert spaces (RKHS).
Due to the lack of unconditional basis properties,
it cannot be extended to the reproducing kernel Banach spaces (RKBS) algorithm.
But the case of Banach space plays an important role in machine learning.
In this paper, we prove that  two TM systems can form unconditional basis for $\mathbb{H}^{p}(D) (1<p <\infty)$.

%\endgraf $^\ast$\,Corresponding author
%\footnote{Corresponding author}

\section{Introduction and main Theorem}
%For $z$ on disk $D$, {\color{red} denote $z=re^{2\pi i t}$, then $|z|<1$} .
Given unit disk $D=\{z=re^{2\pi i t}, 0\leq r<1, t\in [0,1]\}$.
For $1 < p < \infty$, the space of analytic functions $\mathbb{H}^{p}(D)$ is a Banach space composed of analytic functions defined by
$$\mathbb{H}^{p}(D)= \{F(z): F(z) \text{ is analytic in disk }  D  \text{ and }
\sup\limits_{0\leq r<1} \int^{1}_{0} |F(r e^{2\pi i t})|^{p} dt<\infty\}.$$
Takenaka \cite{Ta} introduced Takenaka Malmquist (TM) system in  1925.
%{\color{red} Takenaka Malmquist (TM) system, also known as  rational orthogonal  system, was  introduced by Takenaka \cite{Ta} in  1925.}
This rational orthogonal system generalized the Laguerre basis and two parameter Kautz basis.
TM system consists of some rational functions and blaschke products and attracted a lot of attention
\cite{AN, B}.
TM system is a non-wavelet form orthogonal system in $\mathbb{H}^{2}$.
For $\mathbb{H}^{2}$, TM systems are not only Schauder basis, but also unconditional basis.
Unconditional basis have some good advantages in data processing.
Pereverzyev \cite{P} used reproducing kernel methods to deal with learning algorithms,
and systematically developed reproducing kernel Hilbert spaces (RKHS).
But for $p\neq 2$, the situation becomes complex.
Qian-Chen-Tan \cite{QCT} and Wang-Qian \cite{WQ} proved that rational orthogonal   systems are Schauder basis for $\mathbb{H}^{p}$.
Unconditional convergence can be traced back to Cauchy's introduction of convergent series in 1821.
Since 1925,
to the best of our knowledge,
it is always an open question whether there exists a set of TM systems
which can form an unconditional basis for $\mathbb{H}^{p} (p \neq 2)$.
The unconditional basis structure in Banach space is very complex.
Until 1973, J.Hennefeld \cite{H} proved that there exist three possibility for the unconditional basis for Banach space:
(1) None basis; (2) One basis under equivalence sense; (3) uncountable number of basis.
Since we do not know whether TM system can be an unconditional basis when $p\neq 2$,
for learning algorithms,
we have been unable to use TM system in Banach space.
The objective of this paper is to consider part of the TM system
and to solve this longstanding open problem.
We prove that  some TM system can form unconditional basis in $\mathbb{H}^{p}(D)(p \neq 2)$.
Therefore, RKHS algorithm can be developed to the reproducing kernel Banach spaces (RKBS).

There are at least three methods for approximating analytic functions on the unit disk.
The first method is to use Meyer's bimodal wavelets.
See  Meyer \cite{M} and Qian-Yang \cite{QY}.
This method is obtained after some special technique processing of real-valued Meyer wavelet.
Although Meyer bimodal wavelet method can be used to obtain orthogonal basis,
it does not have a simple and intuitive analytical expression.
Further, Hon-Leong-Qian-Yang-Zou \cite{LQYZ}
recently constructed an unconditional basis of $\mathbb{H}^{p}(D)$  by using rational functions.
This unconditional basis is proved by means of wavelet and quasi-orthogonality.
Although this basis has the advantage of simple expression, it is not orthogonal basis.

The third method is to generalize the basis $1, z, z^2, \cdots$ to get TM orthogonal system \cite{Ta}.
Qian et al. \cite{QCT, WQ} rediscovered TM basis when considering greedy AFD algorithms.
They proved that TM basis all are unconditional basis in $\mathbb{H}^{2}(D)$
and Schauder basis in $\mathbb{H}^{p}(D) (1<p<\infty)$.
The object of this paper is to solve the longstanding open problem
whether there exists a TM system capable of forming an unconditional basis for Banach space $\mathbb{H}^{p}(D)(p \neq 2)$.
The $m-$th term of Takenaka-Malmquist's orthogonal system $\{B_m\}^{+\infty}_{m=1}$
is formed by the following weighted Blaschke product of the $m-1$ order:
\begin{equation}\label{eq:1.1}
B_{m}(z)= \frac{\sqrt{1-|a_m|^{2}}}{1-\bar{a}_m z} \prod\limits_{l=1}^{m-1}\frac{z-a_l}{1-\bar{a}_l z},
\end{equation}
where $a_l \,(l=1,\cdots,m,\cdots)$ are complex numbers with $|a_l|<1$ in the open unit circle.
If $a_1=\cdots=a_m=\cdots=0$, then $\{B_m(e^{2\pi i t})\}_{m=1}^{+\infty}$  is a sequence of the following form
$$ \{1, e^{2\pi i t}, \cdots, e^{2\pi i (m-1)t}, \frac{\sqrt{1-|a_{m+1}|^{2}}e^{2\pi i mt}}{1-\bar{a}_{m+1} e^{2\pi it}}, \cdots, \frac{\sqrt{1-|a_{m+r}|^{2}}e^{2\pi i mt}}{1-\bar{a}_{m+r} e^{2\pi it}} \prod\limits_{j=m+1}^{m+r-1}\frac{e^{2\pi it}-a_j}{1-\bar{a}_je^{2\pi it}}, \cdots\}.$$

In this paper, we first consider the case where all $a_l=0, (l=1,\cdots,m,\cdots)$, that is
\begin{equation}\label{eq:B1.1}
B_{m}^{(0)}(z)=\{1, z, z^2, \cdots\}.
\end{equation}

Then, we consider two cases where $a_l$ is not all $0$:

{\bf Case} (1): $a_1=0, a_l \neq 0\, (l \geq 2)$.  Then the TM system is
\begin{equation}\label{eq:B1.2}
 B_{m}^{(1)}(z)=\{1, \frac{\sqrt{1-|a_{2}|^{2}}z}{1-\bar{a}_{2} z}, \cdots, \frac{\sqrt{1-|a_{1+r}|^{2}}z}{1-\bar{a}_{1+r} z} \prod\limits_{l=2}^{r+1}\frac{z-a_l}{1-\bar{a}_lz}, \cdots\}.
\end{equation}

{\bf Case} (2): $a_1=a_2=0, a_l \neq 0 \, (l \geq 3)$. Then the TM system is
\begin{equation}\label{eq:B1.3}
B_{m}^{(2)}(z)=\{1, z, \frac{\sqrt{1-|a_{3}|^{2}}z^2}{1-\bar{a}_{3} z}, \cdots, \frac{\sqrt{1-|a_{1+r}|^{2}}z^2}{1-\bar{a}_{1+r} z} \prod\limits_{l=2}^{r+1}\frac{z-a_l}{1-\bar{a}_lz}, \cdots\}.
\end{equation}

Experience tells all the authors of this paper
that the choice of points of rational functions represents different mathematical meanings.
For example,
H. Yang and his cooperators have analyzed analytic functions, harmonic functions  and Gauss process in \cite{LQYZ, LYY, YangChen,YYH}.
{\bf The TM system is composed of smooth functions,
and we find that whether it can form an unconditional basis depends on four factors:
vanishing moment, locality, distance to boundary and rotation (distance between points).}
Because in the learning algorithm,
all that is needed is whether there are TM systems that can form an unconditional basis for Banach spaces.
Therefore, we do not consider how to obtain
the sufficient and necessary conditions of unconditional basis through these four conditions.
In this paper, a special selection is made for all $a_l$ that is not equal to 0.
In order for the system of functions to best reflect the above properties,
we reorder $m$ appropriately by distance from the boundary.
We express index $m$ in the form of two indices $(j, k)$
in order to see the distance to the boundary and the rotation of the argument.
Particularly, we take
\begin{equation}\label{eq:point}
r_{j}= \sqrt{1-2^{-j}}, h_{j,k}=2^{-j}k {\rm \, \, \text{and} \,\, }  a_m=a_{j,k}= r_{j} e^{2\pi i h_{j,k}}.
\end{equation}
For the index $m$ and indices $(j, k)$ in the above two cases \eqref{eq:B1.2} and \eqref{eq:B1.3},
their one-to-one corresponding relationship is as follows:

{\bf Case} (1):
For $m=1$, $B^{(1)}_1=1$.
For $m\geq2$, denote $m= 2^{j}+k$, $j\geq1$  and $ 0\leq k<2^{j}$.

{\bf Case} (2):
For $m=1$, $B^{(2)}_1=1$.
For $m=2$, $B^{(2)}_2=z$.
For $m\geq3$, denote $m= 2^{j-1}+k+2, j\geq1$  and $ 0\leq k<2^{j-1}$.

Let $\chi(2^{j}x-k)$ be the characteristic function on the interval $[2^{-j}k, 2^{-j}(k+1)]$.
For the above three TM basis, the first basis has no locality property and is not an unconditional basis for $p\neq 2$.
The following Theorem is known.
\begin{Theorem} \label{th:1}
$\{B_m^{(0)}\}_{m=1}^{+\infty}$ defined in \eqref{eq:B1.1} is not unconditional basis in $\mathbb{H}^{p}(D)(p\neq 2)$.
\end{Theorem}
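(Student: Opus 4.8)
The plan is to prove Theorem~\ref{th:1} by contradiction, exploiting the fact that an unconditional basis is stable under arbitrary sign changes of its coordinates, and then contradicting this stability with the growth of the Dirichlet kernel. Fix $p\in(1,\infty)$ with $p\neq2$ and suppose $\{B_m^{(0)}\}_{m\ge1}=\{z^{n}\}_{n\ge0}$ (so $B_m^{(0)}(z)=z^{m-1}$) were an unconditional basis of $\mathbb{H}^{p}(D)$. Then there is a finite constant $K$ such that for every polynomial $F(z)=\sum_{n}a_n z^{n}$ and every sign sequence $\varepsilon=(\varepsilon_n)$, $\varepsilon_n\in\{-1,1\}$, the diagonal multiplier $M_{\varepsilon}F:=\sum_{n}\varepsilon_n a_n z^{n}$ satisfies $\|M_{\varepsilon}F\|_{\mathbb{H}^{p}}\le K\|F\|_{\mathbb{H}^{p}}$; applying the same inequality to $M_{\varepsilon}F$ and using $M_{\varepsilon}\circ M_{\varepsilon}=\mathrm{Id}$ yields
\[
K^{-1}\,\|F\|_{\mathbb{H}^{p}}\ \le\ \|M_{\varepsilon}F\|_{\mathbb{H}^{p}}\ \le\ K\,\|F\|_{\mathbb{H}^{p}}\qquad\text{for every }\varepsilon .
\]

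First I would average this over independent random signs. On the boundary $B_m^{(0)}(e^{2\pi i t})=e^{2\pi i(m-1)t}$ has modulus $1$, so Khintchine's inequality in $L^{p}([0,1])$ (applied for each fixed $t$, then integrated, using Fubini on the finite sum) gives
\[
\mathbb{E}_{\varepsilon}\,\|M_{\varepsilon}F\|_{\mathbb{H}^{p}}^{p}
=\int_{0}^{1}\mathbb{E}_{\varepsilon}\Bigl|\sum_{n}\varepsilon_n a_n e^{2\pi i n t}\Bigr|^{p}\,dt
\ \approx\ \Bigl(\sum_{n}|a_n|^{2}\Bigr)^{p/2},
\]
with constants depending only on $p$. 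Raising the two-sided estimate above to the $p$-th power and taking $\mathbb{E}_{\varepsilon}$, this forces the norm equivalence $\|F\|_{\mathbb{H}^{p}}\approx\bigl(\sum_{n}|a_n|^{2}\bigr)^{1/2}=\|F\|_{\mathbb{H}^{2}}$ for all polynomials $F$ (the last equality is Parseval). In other words, an unconditional monomial basis would force $\mathbb{H}^{p}(D)=\mathbb{H}^{2}(D)$ with comparable norms.

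Then I would defeat this equivalence with the Dirichlet kernel. Take $F_N(z)=\sum_{n=0}^{N-1}z^{n}=\dfrac{1-z^{N}}{1-z}$, so that $|F_N(e^{2\pi i t})|=|\sin(\pi N t)|/|\sin(\pi t)|$ and
\[
\|F_N\|_{\mathbb{H}^{p}}^{p}=\int_{0}^{1}\frac{|\sin(\pi N t)|^{p}}{|\sin(\pi t)|^{p}}\,dt\ \approx\ N^{\,p-1}\qquad(1<p<\infty),
\]
the range $t\lesssim 1/N$ contributing $\approx N^{p-1}$ and the tail $\approx\int_{1/N}^{1/2}t^{-p}\,dt\approx N^{p-1}$. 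Hence $\|F_N\|_{\mathbb{H}^{p}}\approx N^{1-1/p}$ while $\|F_N\|_{\mathbb{H}^{2}}=\sqrt N$, so $\|F_N\|_{\mathbb{H}^{p}}/\|F_N\|_{\mathbb{H}^{2}}\approx N^{1/2-1/p}$, which is unbounded for $p>2$ and tends to $0$ for $p<2$. This contradicts the equivalence obtained above, and the contradiction proves Theorem~\ref{th:1}.

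The only step that is not a routine classical computation is the first one: extracting from ``unconditional basis'' the uniform boundedness of all diagonal sign-multipliers and then, via Khintchine, the square-function norm equivalence; once that is in hand the size of the Dirichlet kernel does the rest. One could replace the random-signs averaging by an explicit Rudin--Shapiro polynomial $G_N=\sum_{n<N}\varepsilon_n z^{n}$ (for which $\|G_N\|_{\mathbb{H}^{\infty}}\lesssim\sqrt N$, hence $\|G_N\|_{\mathbb{H}^{p}}\approx\sqrt N$), comparing $G_N$ with $M_{\varepsilon}G_N=F_N$; this keeps the argument deterministic at the cost of citing Rudin--Shapiro. Either way, the obstruction is precisely the absence of locality in the monomial system, which is exactly the defect the constructions of the later sections are designed to repair.
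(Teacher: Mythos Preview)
Your argument is correct and follows essentially the same route as the paper: assume unconditionality, use Khintchine's inequality (exploiting $|e^{2\pi i n t}|=1$) to deduce $\|F\|_{\mathbb{H}^p}\approx(\sum|a_n|^2)^{1/2}=\|F\|_{\mathbb{H}^2}$, and conclude by contradiction since $\mathbb{H}^p(D)\neq\mathbb{H}^2(D)$ for $p\neq2$. The only difference is that the paper simply invokes $\mathbb{H}^p\neq\mathbb{H}^2$ as a known fact, whereas you supply an explicit witness via the Dirichlet kernel growth $\|F_N\|_{\mathbb{H}^p}\approx N^{1-1/p}$ (and mention a Rudin--Shapiro alternative); this extra step is sound and makes your proof more self-contained.
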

Although the above Theorem \ref{th:1} is well known,
we will give still a simple proof after Proposition \ref{prop:us2}
in order to compare it with the case where an unconditional basis can be formed.
For the rest two cases,
$\{B_m^{(i)}\}_{m=1}^{+\infty}$
have special vanishing moment properties and locality,
the distances to boundary and rotations satisfy special law.
Similar to wavelets, $\{B_m^{(i)}\}_{m=1}^{+\infty}$ form unconditional basis for $p\neq 2$.
Our main Theorem is
\begin{Theorem} \label{th:1111}
Given $1<p<\infty$.
For $i=1,2$, $\{B_m^{(i)}\}_{m=1}^{+\infty}$ defined in \eqref{eq:B1.2},  \eqref{eq:B1.3} and \eqref{eq:point} form unconditional basis in $\mathbb{H}^{p}(D)$.
Further, we have
\begin{equation} \label{eq:q1}
\quad\|f\|_{\mathbb{H}^{p}}\equiv \|f\|_{\mathbb{H}^{p}_{(1)}} = |f_{1}|+ \|(\sum_{j\geq1, 0\leq k<2^j} 2^{j} |\langle f, B_{j,k}\rangle|^{2}\chi(2^{j}x-k))^{\frac{1}{2}}\|_{L^{p}}<+\infty,
\end{equation}
\begin{equation} \label{eq:q2}
\quad\|f\|_{\mathbb{H}^{p}} \equiv \|f\|_{\mathbb{H}^{p}_{(2)}} = |f_1|+ |f_{2}|+ \|(\sum_{j\geq1, 0\leq k<2^{j-1}} 2^{j} |\langle f, B_{j,k}\rangle|^{2}\chi(2^{j}x-k))^{\frac{1}{2}}\|_{L^{p}}<+\infty.
\end{equation}
\end{Theorem}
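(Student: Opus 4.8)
\noindent\textbf{Proof strategy (for Theorem \ref{th:1111}).}
The plan is to exhibit each $B_{j,k}$ (under the reindexing of Cases (1) and (2)) as a \emph{quasi-wavelet}: a function concentrated at the boundary point $e^{2\pi i h_{j,k}}$ at scale $2^{-j}$ that carries enough cancellation to behave, for $L^{p}$ purposes, like a Meyer or rational wavelet of the same scale and position. Since the Blaschke product has modulus one on the circle, $|B_{j,k}(e^{2\pi i t})|=2^{-j/2}\,|1-r_{j}e^{2\pi i(t-h_{j,k})}|^{-1}\asymp 2^{j/2}\bigl(1+2^{j}\,\mathrm{dist}(t,h_{j,k})\bigr)^{-1}$, an $L^{2}$-normalized bump of scale $2^{-j}$ about $h_{j,k}$; the ``vanishing moment'' is supplied not by compact support but by the oscillation of the Blaschke phase $z^{c_{i}}\prod_{l<m}\frac{z-a_{l}}{1-\bar a_{l}z}$ together with the orthogonality $B_{m}\perp\{B_{1},\dots,B_{m-1}\}$. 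The second preparatory step is to identify the coefficient via the AFD reduction used in \cite{QCT,WQ}: $\langle f,B_{j,k}\rangle=\sqrt{1-r_{j}^{2}}\;G_{j,k}(a_{j,k})$, where $G_{j,k}$ is $f$ with the preceding Blaschke factors (and the matching partial sum) stripped off, so that $2^{j/2}\langle f,B_{j,k}\rangle=G_{j,k}(a_{j,k})$ is, up to the cancellation just noted, an average of $f$ over the Carleson box at $(h_{j,k},2^{-j})$.

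With these in hand I would prove the norm equivalence \eqref{eq:q1}--\eqref{eq:q2} by comparison with the classical area-integral (Lusin/Littlewood--Paley) characterization of $\mathbb{H}^{p}(D)$, equivalently with the wavelet/rational unconditional basis of \cite{M,QY,LQYZ}. The points $\{(h_{j,k},2^{-j})\}$ form a dyadic net of the union of the cones $\Gamma(t)$, $t\in[0,1]$, and $\chi(2^{j}x-k)$ is exactly the shadow of the box at $(h_{j,k},2^{-j})$; hence $\bigl(\sum_{j,k}2^{j}|\langle f,B_{j,k}\rangle|^{2}\chi(2^{j}x-k)\bigr)^{1/2}$ is a discretization of the Lusin area function of $f$, and its $L^{p}$ norm is comparable to $\|f\|_{\mathbb{H}^{p}}$ once the finitely many low-frequency terms $|f_{1}|$ (resp.\ $|f_{1}|+|f_{2}|$) are restored. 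To make the discretization rigorous one shows that the coefficient map $T\colon f\mapsto\{\langle f,B_{j,k}\rangle\}$ and the synthesis map $S\colon\{c_{j,k}\}\mapsto\sum c_{j,k}B_{j,k}$ are bounded between $\mathbb{H}^{p}$ and the tent-type sequence space with norm $\|(\sum|c_{j,k}|^{2}2^{j}\chi(2^{j}x-k))^{1/2}\|_{L^{p}}$: boundedness of $T$ is the area-integral estimate above, and boundedness of $S$ follows by estimating the area integral of $g=\sum c_{j,k}B_{j,k}$, where $|g'(z)|$ over the cone above $t$ is dominated, after an almost-orthogonality (Schur/Cotlar) summation in the scale index that uses the matched boundary distances $\asymp 2^{-j}$ and angular spacings $2^{-j}$, by a vector-valued expression to which the Fefferman--Stein maximal inequality applies; this is where $1<p<\infty$ enters. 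Since $\{B_{m}^{(i)}\}$ is already known to be a Schauder basis of $\mathbb{H}^{p}$ (\cite{QCT,WQ}), one has $S\circ T=\mathrm{id}$ on $\mathbb{H}^{p}$, and the two boundedness statements then yield the equivalences \eqref{eq:q1}--\eqref{eq:q2}.

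Unconditionality is then immediate, and is precisely the reason for casting the equivalent norm in square-function form: the right-hand sides of \eqref{eq:q1}--\eqref{eq:q2} depend on the expansion only through the moduli $|\langle f,B_{j,k}\rangle|$, so for every bounded scalar sequence $\{\varepsilon_{j,k}\}$ (in particular signs $\pm1$, or unimodular factors) the multiplier $\sum c_{j,k}B_{j,k}\mapsto\sum\varepsilon_{j,k}c_{j,k}B_{j,k}$ is bounded on $\mathbb{H}^{p}$ with norm $\lesssim\sup_{j,k}|\varepsilon_{j,k}|$; equivalently, every rearrangement of $\sum_{m}\langle f,B_{m}^{(i)}\rangle B_{m}^{(i)}$ converges to $f$ in $\mathbb{H}^{p}$. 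This is the definition of an unconditional basis and finishes the proof.

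I expect the decisive difficulty to be the synthesis side, i.e.\ turning the heuristic ``the $a_{j,k}$ sit in hyperbolic position'' into genuinely summable off-diagonal estimates: one must bound the interaction numbers $\langle B_{j,k},B_{j',k'}\rangle$, and $\langle B_{j,k},\psi_{j',k'}\rangle$ against true wavelets $\psi_{j',k'}$, by $C\,2^{-|j-j'|(\frac12+\delta)}$ times a quantity summable in the spatial separation of $(j,k)$ and $(j',k')$, despite the merely $1/|t|$ decay of $|B_{j,k}|$ on the circle. Both the extra spatial decay and the cancellation must be extracted from the oscillation of the Blaschke phase and from the orthogonality built into the TM construction; once this almost-diagonal bound is in place, the rest is standard Frazier--Jawerth/Calder\'on--Zygmund machinery, and the special choice \eqref{eq:point} is exactly what the four factors emphasized in the introduction---vanishing moment, locality, distance to boundary, rotation---are designed to guarantee.
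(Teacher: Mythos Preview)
Your outline is a reasonable route to the result, but it is \emph{not} the paper's, and it is substantially heavier than what the paper actually does.

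The paper never compares $\{B_{j,k}\}$ with a reference wavelet system, never discretizes the Lusin area integral, and never proves any almost-diagonal estimate between different scales or positions. Instead it proceeds in two short steps. First (Theorems~\ref{TH:3}--\ref{TH:4}), by the standard Khintchine--Fubini randomization it shows that unconditionality of $\{B^{(i)}_{m}\}$ in $\mathbb H^{p}$ is equivalent to the norm equivalence $\|f\|_{\mathbb H^{p}}\simeq N_{p}^{(i)}f$, where $N_{p}^{(i)}f$ is the square function $\big\|(\sum_{j,k}|\langle f,B_{j,k}\rangle|^{2}\,|B_{j,k}|^{2})^{1/2}\big\|_{L^{p}}$ built from the actual moduli $|B_{j,k}|$ rather than from $2^{j}\chi(2^{j}x-k)$. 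Second (Lemma~\ref{pro:3} and Section~6), it exploits the elementary but decisive fact that on $\partial D$ the Blaschke product has modulus exactly one, so that
\[
|B_{j,k}(e^{2\pi ix})|^{2}=\frac{1-r_{j}^{2}}{|1-r_{j}e^{2\pi i(x-h_{j,k})}|^{2}}
\]
is \emph{explicitly} a bump of height $\sim 2^{j}$ and width $\sim 2^{-j}$ at $h_{j,k}$. From this one reads off pointwise that $|B_{j,k}|^{2}\ge c\,2^{j}\chi(2^{j}x-k)$ and that $\sum_{0\le k<2^{j}}|B_{j,k}|^{2}\le C\,2^{j}$ for each fixed $j$. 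The first bound gives $\|f\|_{\mathbb H^{p}_{(i)}}\lesssim N_{p}^{(i)}f$ trivially; the second, together with $2^{j/2}|\langle f,B_{j,k}\rangle|\le Mf_{j}(x)$ for the level-$j$ step function $f_{j}$, feeds into the Fefferman--Stein vector maximal inequality to give the reverse bound $N_{p}^{(i)}f\lesssim \|f\|_{\mathbb H^{p}_{(i)}}$. No interaction between different $j$'s is ever estimated.

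What your route buys is robustness and a self-contained closure: by proving boundedness of both the analysis and synthesis maps against an independent characterization of $\mathbb H^{p}$ (area integral or the wavelet basis of \cite{M,QY,LQYZ}) and then invoking the Schauder property of \cite{QCT,WQ} to get $S\circ T=\mathrm{id}$, you avoid any appeal to a square-function identity that itself presupposes the unconditionality you are trying to prove. What the paper's route buys is brevity: because $|{\rm Blaschke}|\equiv 1$ on $\partial D$, the ``decisive difficulty'' you anticipate (extracting summable off-diagonal decay from a function with only $|t|^{-1}$ tails) simply never arises---everything reduces to a single-scale pointwise comparison plus Fefferman--Stein. If you want to follow the paper, drop the Cotlar/Schur and Frazier--Jawerth machinery and instead prove Lemma~\ref{pro:3} directly; if you prefer your own route, be aware that the interaction estimates you flag are genuinely needed there but are entirely absent from the paper's argument.
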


The above theorems provide a rigorous mathematical basis for the use of TM systems
on complex analysis and artificial intelligence.
The rest of this paper is structured as follows:
In Section 2, we present some preliminaries on
Hardy-Littlewood maximum function and
Khintchine inequality.
In Section 3, we present first some preliminaries on TM system.
Then we apply the non-locality property of the particular TM system $B_m^{(0)}(z)$
to prove that $B_m^{(0)}(z)$ cannot become unconditional basis in $\mathbb{H}^{p}(D)(p \neq 2)$.
In Section 4, for $i=1$ and $2$ and $1<p <\infty$,
by using Khintchine inequality based on Bernoulli probability measure,
we consider some sufficient and necessary condition for
$B_m^{(i)}(z)$ to become unconditional basis in $\mathbb{H}^{p}(D)$.
In Section 5, we prove two properties related to TM system
where the rational functions are restricted on the boundary of the unit disk $D$.
In the last Section, we use Hardy-Littlewood maximum function
and Fefferman-Stein vector maximum function to prove Main Theorem \ref{th:1111}.

\section{Maximum function and Khintchine inequality}
For $1<p<\infty$, Hardy space can be characterized by the $L^{p}$ norm on the boundary.
\begin{Definition} \label{def:1}
For $1<p<\infty$, denote
\begin{equation}
 \mathbb{H}_{\partial}^{p}( D)= \{ F(z): F(z)= F(|z| e^{2\pi i t}) \text{ is analytic in disk } D \text{ and}  \int^{1}_{0} |F( e^{2\pi i t})|^{p} dt<\infty\}.
\end{equation}
\end{Definition}
The following Proposition says that the analytic Hardy spaces $\mathbb{H}^{p}( D)$ defined by norm in the unit disk
and the analytic Hardy spaces $\mathbb{H}_{\partial}^{p}( D)$ defined by norm on the boundary of the unit disk are the same spaces.
See \cite{AN, B, CDL}.
\begin{Proposition} \label{prop:DB}
For $1<p<\infty$,
$$\|f\|_{\mathbb{H}^p(D)} \sim \|f|_{\partial D}\|_{L^p}.$$
Hence
\begin{equation}
 \mathbb{H}^p(D)=\mathbb{H}^p_{\partial}(D).
\end{equation}
\end{Proposition}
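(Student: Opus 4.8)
The plan is to prove the asserted norm equivalence by establishing each inequality separately; the non-trivial half rests on the Poisson integral representation of $\mathbb{H}^{p}(D)$ functions, and it is precisely there that the hypothesis $1<p<\infty$ is used. First I would record two classical facts. (i) If $F$ is analytic in $D$, then $|F|^{p}$ is subharmonic for every $p>0$, so the integral means $M_{p}(r,F):=\big(\int_{0}^{1}|F(re^{2\pi i t})|^{p}\,dt\big)^{1/p}$ are non-decreasing in $r\in[0,1)$; hence $\|F\|_{\mathbb{H}^{p}(D)}=\sup_{0\le r<1}M_{p}(r,F)=\lim_{r\to1^{-}}M_{p}(r,F)$. (ii) Every $F\in\mathbb{H}^{p}(D)$ has a radial (indeed nontangential) boundary limit $F(e^{2\pi i t})=\lim_{r\to1^{-}}F(re^{2\pi i t})$ for a.e. $t$ (Fatou's theorem). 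Combining (i) and (ii) with Fatou's lemma yields
$$\|F|_{\partial D}\|_{L^{p}}^{p}=\int_{0}^{1}\lim_{r\to1}|F(re^{2\pi i t})|^{p}\,dt\le\liminf_{r\to1}M_{p}(r,F)^{p}=\|F\|_{\mathbb{H}^{p}(D)}^{p},$$
so $F|_{\partial D}\in L^{p}(\partial D)$ and $\|F|_{\partial D}\|_{L^{p}}\le\|F\|_{\mathbb{H}^{p}(D)}$.

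For the reverse estimate I would show that every $F\in\mathbb{H}^{p}(D)$ is the Poisson integral of $F|_{\partial D}$. For $\rho\in(0,1)$ the dilate $F(\rho\,\cdot)$ is continuous on $\overline{D}$, so the mean value property gives $F(\rho r e^{2\pi i t})=\int_{0}^{1}P_{r}(t-s)F(\rho e^{2\pi i s})\,ds$, where $P_{r}\ge0$ is the Poisson kernel with $\int_{0}^{1}P_{r}=1$. The family $\{F(\rho\,\cdot)\}_{\rho<1}$ is bounded in $L^{p}$; since $1<p<\infty$, $L^{p}$ is reflexive, so along some sequence $\rho_{n}\to1$ one has $F(\rho_{n}\,\cdot)\rightharpoonup g$ weakly in $L^{p}$ with $\|g\|_{L^{p}}\le\liminf_{n}\|F(\rho_{n}\,\cdot)\|_{L^{p}}\le\|F\|_{\mathbb{H}^{p}(D)}$. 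Testing this weak convergence against $P_{r}(t-\cdot)\in L^{\infty}\subseteq L^{p'}$ and against the characters $e^{-2\pi i n\cdot}$, and letting $\rho_{n}\to1$ (using continuity of $F$ at interior points), gives $F(re^{2\pi i t})=\int_{0}^{1}P_{r}(t-s)g(s)\,ds$ together with $\widehat{g}(n)=0$ for $n<0$; Fatou's theorem for Poisson integrals then identifies $g=F|_{\partial D}$ a.e. Minkowski's integral inequality (equivalently Young's inequality for convolutions) now gives $M_{p}(r,F)=\|P_{r}\ast g\|_{L^{p}}\le\|P_{r}\|_{L^{1}}\|g\|_{L^{p}}=\|F|_{\partial D}\|_{L^{p}}$, whence $\|F\|_{\mathbb{H}^{p}(D)}=\sup_{r}M_{p}(r,F)\le\|F|_{\partial D}\|_{L^{p}}$. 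Conversely, given $g\in L^{p}(\partial D)$ with $\widehat{g}(n)=0$ for $n<0$, its Poisson integral $P[g]$ is a power series in $z$, hence analytic in $D$, the same Minkowski bound yields $M_{p}(r,P[g])\le\|g\|_{L^{p}}$ so $P[g]\in\mathbb{H}^{p}(D)$, and its boundary trace equals $g$ a.e.; thus $F\mapsto F|_{\partial D}$ and $g\mapsto P[g]$ are mutually inverse and norm-equivalent, which proves $\mathbb{H}^{p}(D)=\mathbb{H}^{p}_{\partial}(D)$ with $\|F\|_{\mathbb{H}^{p}(D)}\sim\|F|_{\partial D}\|_{L^{p}}$ (in fact with equality).

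The only substantive point --- and the sole place where the restriction $1<p<\infty$ is genuinely needed --- is the Poisson representation step: one must exclude a singular boundary contribution and actually reconstruct $F$ from an $L^{p}$ boundary function, which I would do via weak sequential compactness of bounded sets in the reflexive space $L^{p}$. For $p=1$ one only obtains a weak-$\ast$ limit in the space of finite measures and the statement can fail, as the Poisson kernel itself illustrates. All remaining ingredients --- monotonicity of integral means, Fatou's theorem, and Minkowski's and Young's inequalities --- are standard and may be cited from \cite{AN, B, CDL}.
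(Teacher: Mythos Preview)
Your argument is correct and follows the classical route to this identification. The paper, however, offers no proof of its own: it states the proposition as a known fact and simply refers the reader to \cite{AN, B, CDL}. There is therefore nothing to compare against, and your detailed argument --- monotonicity of integral means via subharmonicity of $|F|^{p}$, Fatou's theorem, the Poisson representation obtained through weak sequential compactness in the reflexive space $L^{p}$, and Young's inequality --- supplies exactly what the paper omits.

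One small correction to your closing remark: it is not the \emph{statement} that fails for $p=1$, only your proof method. For analytic $\mathbb{H}^{1}(D)$ the norm identity $\|F\|_{\mathbb{H}^{1}(D)}=\|F|_{\partial D}\|_{L^{1}}$ still holds; the weak-$\ast$ limit you would obtain is a priori only a finite measure, but the F.\ and M.\ Riesz theorem forces that measure to be absolutely continuous. The Poisson kernel example you cite illustrates the failure for the \emph{harmonic} class $h^{1}$, not for the \emph{analytic} class $\mathbb{H}^{1}$. This does not affect the validity of your proof in the stated range $1<p<\infty$.
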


When comparing the norm of a function in $\mathbb{H}^{p}(D)$,
we need to use Hardy-Littlewood maximum function.
We use Hardy-Littlewood maximum function to consider Hardy spaces $\mathbb{H}^{p}(D)$.
See \cite{CDL, Triebel} and \cite{Yang1}.
Hardy-Littlewood maximum function is defined as follows.

\begin{Definition}
Let $f(x)$ be a locally integrable function on $\mathbb{R}$.
Let $Q$ denote the interval on $\mathbb{R}$ with length $|Q|$.
For any point $x \in \mathbb{R}$, defined
$$Mf(x)= \sup\limits_{x\in Q}\frac{1}{|Q|}\int_{Q} |f(y)|dy,$$
where $Mf$ is the Hardy-Littlewood maximum function of $f(x)$.
Similarly, we can defined Hardy-Littlewood maximum function on the interval $[0,1]$:
$$Mf(x)= \sup\limits_{|Q|\leq 1, x\in Q}\frac{1}{|Q|}\int_{Q} |f(y)|dy .$$
\end{Definition}

For the function column $f_k$, denote $\|\{f_k\}\|_{L^p(l^q)}=\|(\sum_k |f_k|^q)^{\frac{1}{q}}\|_{L^p}$.
Hardy spaces $\mathbb{H}^{p}$ are spaces of type $L^{p}(l^{2})$, see \cite{CDL, FS, Triebel}.
Fefferman-Stein vector value maximum operator theorem told us that
the norm defined by a maximal operator is controlled by the non-maximal case.
\begin{Lemma} \label{lemma:1}
For $1<p,q<\infty$, then
\begin{equation}
\|\{Mf_k\}\|_{L^p(l^q)} \leq \|\{f_k\}\|_{L^p(l^q)}, \,\forall f=\{f_k\}.
\end{equation}
\end{Lemma}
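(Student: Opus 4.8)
The plan is to derive the vector-valued inequality from the scalar Hardy--Littlewood maximal theorem, treating the exponents $p$ and $q$ in three stages: the diagonal $p=q$, then $q<p$ by duality, and finally $q>p$ by linearization and duality reducing to the previous case. Throughout I read the asserted bound as $\|\{Mf_k\}\|_{L^p(\ell^q)}\le C_{p,q}\|\{f_k\}\|_{L^p(\ell^q)}$ with a constant $C_{p,q}$. The scalar inputs are the strong estimate $\|Mf\|_{L^p}\le C_p\|f\|_{L^p}$ for $1<p\le\infty$ together with the weak type bound $|\{Mf>\lambda\}|\le \tfrac{C}{\lambda}\|f\|_{L^1}$, both standard. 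When $p=q$, Tonelli's theorem and the scalar bound give $\|\{Mf_k\}\|_{L^p(\ell^p)}^p=\sum_k\|Mf_k\|_{L^p}^p\le C\sum_k\|f_k\|_{L^p}^p=C\|\{f_k\}\|_{L^p(\ell^p)}^p$, so this case is immediate.

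The pivotal auxiliary step is a weighted estimate. First I would prove, for every nonnegative locally integrable weight $u$ and every $g\ge 0$, the weighted weak type bound $u(\{Mg>\lambda\})\le \tfrac{C}{\lambda}\int g\,(Mu)$: one covers the open set $\{Mg>\lambda\}$ by intervals $Q_j$ with $\tfrac{1}{|Q_j|}\int_{Q_j}g>\lambda$ and bounded overlap, and uses $\tfrac{1}{|Q_j|}\int_{Q_j}u\le \inf_{Q_j}Mu$, so that $\int_{Q_j}u\le (\inf_{Q_j}Mu)\tfrac{1}{\lambda}\int_{Q_j}g\le \tfrac{1}{\lambda}\int_{Q_j}g\,(Mu)$. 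Since trivially $\|Mg\|_{L^\infty(u)}\le\|g\|_{L^\infty(Mu)}$, Marcinkiewicz interpolation between these two weighted endpoints yields, for $1<q<\infty$, the key inequality $\int (Mf)^q\,u\le C_q\int|f|^q\,(Mu)$, which I refer to below as $(\dagger)$.

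For the case $1<q<p<\infty$ I would set $r=p/q>1$ and use duality in $L^r$: choosing $u\ge0$ with $\|u\|_{L^{r'}}\le1$ that tests $\|\sum_k(Mf_k)^q\|_{L^r}$, I apply $(\dagger)$ termwise and sum to get $\int\sum_k(Mf_k)^q\,u\le C\int(\sum_k|f_k|^q)(Mu)\le C\|\sum_k|f_k|^q\|_{L^r}\|Mu\|_{L^{r'}}$, and the scalar theorem bounds $\|Mu\|_{L^{r'}}\le C\|u\|_{L^{r'}}\le C$; taking $q$-th roots gives the claim for $q<p$. For the remaining case $1<p<q<\infty$, the hard part is that $M$ is only sublinear, so I would linearize: assuming $f_k\ge0$, for each $k$ pick intervals $Q_k(x)\ni x$ with $A_kf_k(x):=\tfrac{1}{|Q_k(x)|}\int_{Q_k(x)}f_k$ approximating $Mf_k(x)$ from below, note $|A_kh|\le Mh$ and that the adjoint satisfies $|A_k^{*}h|\le C\,Mh$, and dualize $L^p(\ell^q)$ against $L^{p'}(\ell^{q'})$. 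This turns the estimate into $\sum_k\int f_k\,A_k^{*}h_k\le C\,\|\{f_k\}\|_{L^p(\ell^q)}\,\|\{Mh_k\}\|_{L^{p'}(\ell^{q'})}$ after Hölder in $\ell^q$ and then in $L^p$; since $p<q$ forces $q'<p'$, the last factor is controlled by the case already proved with exponents $(p',q')$, and taking the supremum over the linearizations restores $M$.

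The step I expect to be the \emph{main obstacle} is the off-diagonal range, in particular $q>p$: the nonlinearity of $M$ must be removed by the linearization--duality device, and one has to verify that the averaging operators $A_k$ and their adjoints $A_k^{*}$ are uniformly dominated by $M$ so the reduction to $(p',q')$ is legitimate. Establishing $(\dagger)$ via the weighted endpoints and the bounded-overlap covering is routine but underlies everything. Finally, the same argument applies verbatim to the maximal operator restricted to $[0,1]$, which is the form used for $\mathbb{H}^p(D)$.
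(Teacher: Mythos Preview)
The paper does not prove this lemma at all: it is quoted as the Fefferman--Stein vector-valued maximal theorem, with references to \cite{CDL, FS, Triebel}, and is used later as a black box in the proof of Theorem~\ref{th:1111}. So there is no ``paper's own proof'' to compare against; you are supplying an argument where the authors simply cite the literature.

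Your sketch is the standard Fefferman--Stein route and is essentially correct. The diagonal case $p=q$ and the weighted inequality $(\dagger)$ followed by $L^{p/q}$--duality for $q<p$ are exactly the classical argument. For $q>p$ your linearization-plus-duality reduction to the exponents $(p',q')$ is a legitimate way to close the loop; the only point that deserves care, as you flag, is arranging the averaging operators so that the adjoints $A_k^{*}$ are genuinely dominated by $M$ (e.g.\ by taking the $Q_k(\cdot)$ piecewise constant on a measurable partition with each piece contained in a fixed dilate of its associated interval, or by passing through dyadic maximal functions). An alternative for $q>p$---closer to the original Fefferman--Stein paper---is to prove the vector-valued weak-type $(1,1)$ bound for $f\mapsto(\sum_k(Mf_k)^q)^{1/q}$ via a Calder\'on--Zygmund decomposition of $(\sum_k|f_k|^q)^{1/q}$ and then interpolate with the diagonal $p=q$; either path is fine. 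Your remark that the argument transfers verbatim to the maximal function restricted to $[0,1]$ is correct and is indeed the form used downstream in the paper.
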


In order to consider the properties of an unconditional basis,
we need to use the Khintchine inequality, which has to do with probability.
See \cite{Zygmund}.
Let $\Lambda$ be a set of indices
and let $\Omega$ be the product set $\{1, -1\}^{\Lambda}$.
We provide this product set a Bernoulli probability measure $d u(\omega)$.
This measure is defined by the product of measures obtained by taking mass $\frac{1}{2}$ at $-1$ and $1$ for each factor.
Thus the element over $\Omega$ is the sequence $\omega(\lambda) (\lambda\in\Lambda)$ consisting of $\pm 1$.
We need the following Lemma on Khintchine inequality.

\begin{Lemma} \label{lemma:2}
On the closed subspace of $L^2(\Omega)$ space
composed of functions $S(\omega)=\sum\limits_{\lambda\in\Lambda}\alpha(\lambda)\omega(\lambda)$,
all $L^p(\Omega,d u(\omega))(0<p<\infty)$ norms are equivalent.
That is, there exist two constants $C_p \geq C_p^{'} > 0$ such that
\begin{equation}
C_p^{'} (\sum_{\lambda \in \Lambda} |\alpha(\lambda)|^2)^{\frac{1}{2}} \leq  (\int_{\Omega}|S(\omega)|^{p} d u(\omega))^{\frac{1}{p}}
\leq C_p (\sum_{\lambda \in \Lambda} |\alpha(\lambda)|^2)^{\frac{1}{2}}.
\end{equation}
\end{Lemma}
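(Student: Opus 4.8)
The plan is to establish the $L^2$ identity first and then treat the two genuinely nontrivial estimates separately, reducing everything to the case of finitely many coefficients. Since $S\in L^2(\Omega)$ forces $\sum_\lambda|\alpha(\lambda)|^2<\infty$, and all the inequalities are stable under $L^2$-limits, I would first assume $\Lambda$ finite and recover the general index set at the very end by monotone convergence over finite subcollections. The Rademacher variables $\omega(\lambda)$ are independent with mean $0$ and variance $1$, hence orthonormal in $L^2(\Omega,du)$; expanding $\int_\Omega|S|^2\,du$ and using this orthonormality (the cross terms vanish even for complex coefficients, since $\int_\Omega\omega(\lambda)\omega(\mu)\,du=\delta_{\lambda\mu}$) gives the exact identity $\|S\|_{L^2}^2=\sum_\lambda|\alpha(\lambda)|^2$. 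Thus the case $p=2$ holds with $C_2=C_2'=1$, and this identity is the anchor for everything else.

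Next I would dispose of the two estimates that are free on a probability space. Because $u$ is a probability measure, $p\mapsto\|S\|_{L^p}$ is nondecreasing; hence for $p\ge2$ the lower bound $\|S\|_{L^p}\ge\|S\|_{L^2}=(\sum_\lambda|\alpha(\lambda)|^2)^{1/2}$ is immediate, and for $0<p\le2$ the upper bound $\|S\|_{L^p}\le\|S\|_{L^2}=(\sum_\lambda|\alpha(\lambda)|^2)^{1/2}$ follows from Jensen's (equivalently Hölder's) inequality. So the real content reduces to the upper bound for $p>2$ and the lower bound for $0<p<2$.

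For the upper bound when $p>2$, I first treat real coefficients and then recover the complex case from the triangle inequality $\|S\|_{L^p}\le\|S_\beta\|_{L^p}+\|S_\gamma\|_{L^p}$, where $S_\beta,S_\gamma$ are the real Rademacher sums built from the real and imaginary parts of $\alpha$. For real coefficients the key step is a subgaussian moment estimate: normalizing $\sum_\lambda|\alpha(\lambda)|^2=1$ and using independence, $\int_\Omega e^{tS}\,du=\prod_\lambda\cosh(t\alpha(\lambda))$, and the elementary inequality $\cosh x\le e^{x^2/2}$ yields $\int_\Omega e^{tS}\,du\le e^{t^2/2}$ for all $t\in\mathbb{R}$. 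A Markov argument followed by optimization in $t$ gives the subgaussian tail bound $u(\{|S|>s\})\le 2e^{-s^2/2}$, and integrating $\|S\|_{L^p}^p=p\int_0^\infty s^{p-1}u(\{|S|>s\})\,ds$ produces $\|S\|_{L^p}\le C_p$ with $C_p$ of order $\sqrt{p}$; undoing the normalization restores the factor $(\sum_\lambda|\alpha(\lambda)|^2)^{1/2}$. I expect this exponential-moment estimate to be the main obstacle, since every other direction is either an identity or a soft consequence of it.

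Finally, the lower bound for $0<p<2$ follows by interpolation once the upper bound is in hand for some exponent $q>2$ (say $q=4$), and this argument works directly for complex coefficients. Log-convexity of $r\mapsto\|S\|_{L^r}$ gives, for the parameter $\theta\in(0,1)$ determined by $\tfrac12=\tfrac{1-\theta}{p}+\tfrac{\theta}{q}$, the inequality $\|S\|_{L^2}\le\|S\|_{L^p}^{1-\theta}\|S\|_{L^q}^{\theta}$. Substituting the already-proven upper bound $\|S\|_{L^q}\le C_q\|S\|_{L^2}$ together with the identity $\|S\|_{L^2}=(\sum_\lambda|\alpha(\lambda)|^2)^{1/2}$, and solving for $\|S\|_{L^p}$, yields $\|S\|_{L^p}\ge C_q^{-\theta/(1-\theta)}(\sum_\lambda|\alpha(\lambda)|^2)^{1/2}$, the desired lower bound with $C_p'=C_q^{-\theta/(1-\theta)}$. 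Passing to the $L^2$-limit over finite subcollections of $\Lambda$ then completes the proof for an arbitrary index set.
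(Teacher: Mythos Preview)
Your argument is correct and is the standard route to Khintchine's inequality: the $L^2$ identity from orthonormality of the Rademacher variables, the subgaussian tail via $\cosh x\le e^{x^2/2}$ and independence, and then log-convex interpolation to recover the lower bound for small $p$. The reduction to finite $\Lambda$ and the handling of complex coefficients are also fine.

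However, there is nothing to compare against: the paper does not prove this lemma. It is quoted as a classical result with a reference to Zygmund's \emph{Trigonometric Series}, and is used as a black box in Sections~3 and~4. So your write-up supplies a self-contained proof where the paper simply cites the literature; the approach you chose is essentially the textbook one and would be an appropriate replacement for the citation if a proof were wanted.
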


\section{Preliminaries on TM system}
We consider Szeg\"{o} kernel $e_{a}(z)$ with the $L^2$ unit modularized.
Let $e_{a}(z)$ be rational function at point $a$ within the unit disk $D$
defined as follows:
$$e_{a}(z)= \frac{\sqrt{1-|a|^{2}}}{1-\bar{a} z}.$$
Let $\prod\limits_{l=1}^{r-1}\frac{z-a_l}{1-\bar{a}_l z}$ denote a Blaschke product of the $(r-1)$ order.
TM orthogonal system $\{B_m\}^{+\infty}_{m=1}$ is made up of both Szeg\"{o} kernel and Blaschke product.
Takenaka \cite{Ta} proved that
$\overline{span}\{B_m\}^{+\infty}_{m=1}=\mathbb{H}^2(D)$ holds if and only if $a_m$ satisfies the hyperbolic non-separability condition
$$\sum_{m=1}^{+\infty} (1-|a_m|)=+\infty.$$
Qian-Chen-Tan \cite{QCT} generalized such result for $p\in (1,\infty)$
and proved that
$\overline{span}\{B_m\}^{+\infty}_{m=1}=\mathbb{H}^p(D)$ holds if and only if $a_m$ satisfies the hyperbolic non-separability condition
$$\sum_{m=1}^{+\infty} (1-|a_m|)=+\infty.$$

A special case of TM system is that $a_m=0$ for all $m$.
In this case $B_m(z)=z^m$.
Combining the conclusions in \cite{QCT} and \cite{Ta}, we have the following property.

\begin{Proposition}\label{prop:us2}
$\{1, z, \cdots, z^m, \cdots\}$ forms an unconditional basis for Hilbert space $\mathbb{H}^{2}(D)$
and a Schauder basis for Banach space $\mathbb{H}^{p}(D)(1 < p < \infty)$.
\end{Proposition}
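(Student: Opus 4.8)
The plan is to separate the Hilbert-space assertion from the Banach-space assertion: the former follows from orthogonality, the latter from a uniform boundedness estimate for the Taylor truncation operators.

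For $\mathbb{H}^2(D)$ I would first record that, via Proposition \ref{prop:DB}, $\|z^m\|_{\mathbb{H}^2}=\|e^{2\pi i m t}\|_{L^2[0,1]}=1$ and $\langle z^m,z^n\rangle=\int_0^1 e^{2\pi i(m-n)t}\,dt=\delta_{mn}$, so $\{z^m\}_{m=0}^{+\infty}$ is an orthonormal system. Completeness is the special case $a_m\equiv 0$ of Takenaka's theorem, since then the hyperbolic non-separability condition $\sum_m(1-|a_m|)=\sum_m 1=+\infty$ holds; hence $\overline{\mathrm{span}}\{z^m\}=\mathbb{H}^2(D)$. An orthonormal basis of a Hilbert space is automatically unconditional: for any $x$ and any finite set $F$ of indices one has $\|x-\sum_{m\in F}\langle x,z^m\rangle z^m\|_{\mathbb{H}^2}^2=\sum_{m\notin F}|\langle x,z^m\rangle|^2$, which tends to $0$ as $F$ exhausts $\mathbb{N}$, so the coordinate expansion converges unconditionally. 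This gives the first assertion.

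For $\mathbb{H}^p(D)$ with $1<p<\infty$, two ingredients suffice. First, analytic polynomials are dense in $\mathbb{H}^p(D)$: for $f\in\mathbb{H}^p(D)$ the dilates $f_r(z):=f(rz)$ satisfy $\|f_r-f\|_{\mathbb{H}^p}\to 0$ as $r\to 1^-$, and each $f_r$ is analytic on a neighbourhood of $\overline{D}$, so its Taylor polynomials converge to it uniformly, hence in $\mathbb{H}^p$. Second, the Taylor truncation operators $S_N f=\sum_{m=0}^N f_m z^m$ are uniformly bounded on $\mathbb{H}^p(D)$: identifying $\mathbb{H}^p(D)$ with the analytic subspace of $L^p(\partial D)$ by Proposition \ref{prop:DB}, $S_N$ is the restriction to analytic functions of the Fourier partial-sum operator on $[0,1]$, whose uniform $L^p$ bound for $1<p<\infty$ is the classical M. Riesz theorem (equivalently, $L^p$-boundedness of the Riesz projection). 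Uniform boundedness of $\{S_N\}$ together with density of $\bigcup_N S_N(\mathbb{H}^p(D))$ is precisely the criterion for $\{z^m\}$ to be a Schauder basis of $\mathbb{H}^p(D)$. Alternatively one may simply invoke \cite{QCT}, where this is established for every TM system obeying the non-separability condition.

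The only genuinely nontrivial input is M. Riesz's theorem, and it is exactly there that $1<p<\infty$ enters; for $p=1$ or $p=\infty$ the truncation operators are unbounded and the system is not even a Schauder basis. I would be careful not to claim more: that $\{z^m\}$ fails to be an \emph{unconditional} basis of $\mathbb{H}^p(D)$ for $p\neq 2$ is the content of Theorem \ref{th:1}, and nothing here contradicts it, since uniform boundedness of the \emph{ordered} partial sums does not upgrade to uniform control of all sign choices $\sum_m \pm f_m z^m$.
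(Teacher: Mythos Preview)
Your argument is correct. Note, however, that the paper does not actually prove Proposition~\ref{prop:us2}: it is stated as a direct consequence of the cited results in \cite{Ta} (completeness of TM systems under the hyperbolic non-separability condition) and \cite{QCT} (Schauder basis property of rational orthogonal systems in $\mathbb{H}^p(D)$), and no further justification is given there. Your proposal supplies the standard self-contained proof---orthonormality plus completeness for the $\mathbb{H}^2$ case, and density of polynomials together with the M.~Riesz uniform bound on Fourier partial sums for the $\mathbb{H}^p$ Schauder claim---and you even note at the end that one may alternatively cite \cite{QCT}, which is precisely what the paper does. So your route is more detailed than the paper's, not different from it; the paper simply defers to the literature where you have written out the classical argument.
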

However, the system $\{1, z, \cdots, z^m, \cdots\}$ has no locality
and cannot form unconditional basis for $\mathbb{H}^p(D)(p \neq 2)$.
Theorem \ref{th:1} is well known,
but in order to compare with the latter two bases and see how locality works on unconditional basis properties,
we still use Khintchine's inequality and proof by contradiction to give a short proof of Theorem \ref{th:1}.
\begin{proof}
For the analytic function $F(z)$ on the unit disk, we consider its value $f(x)$ on the unit circle
$f(x)=F(e^{2\pi ix})=\sum_{k\geq 0} \alpha(k)e^{2\pi ikx}$.
For $\{\omega(k)\}_{k\geq 0}\in\{-1, 1\}^{\mathbb{N}}$, denote $T_{\omega}e^{2\pi ikx}=\omega(k) e^{2\pi ikx}$.
All the functions $S(\omega)=\sum_{k \geq 0}\alpha(k)\omega(k)e^{2\pi ikx}$
belong to the closed subspace in $L^2(\Omega)$ space.
According to Lemma \eqref{lemma:2},
all modulus of $L^p(\Omega,d u(\omega))$
are equivalent to each other for $0<p<\infty$.
Denote $N_{p}(x)=(\int_{\Omega}|S(\omega)|^p d u(\omega))^{\frac{1}{p}}$.
According to the unconditional basis assumption and
Khintchine inequality,  we have

\begin{equation}\label{eq:3.1}
C_1(\sum\limits_{k \geq 0} |\alpha(k)e^{2\pi ikx}|^2)^{\frac{1}{2}} \leq N_{p}(x) \leq
 C_2(\sum\limits_{k \geq 0} |\alpha(k)e^{2\pi ikx}|^2)^{\frac{1}{2}}.
\end{equation}

According to the  formula \eqref{eq:3.1}, we have
\begin{equation}\label{eq:3.2}
N_{p}(x)\sim (\sum\limits_{k \geq 0} |\alpha(k)|^2)^{\frac{1}{2}}.
\end{equation}
That is to say, $\mathbb{H}^p(D)=\mathbb{H}^2(D)$.
But for $p\neq 2$, $\mathbb{H}^p(D)\neq \mathbb{H}^2(D)$,
this leads to a paradoxical conclusion.
Thus, by contradiction,
$\{1, z, \cdots, z^m, \cdots\}$ cannot form an unconditional basis for $\mathbb{H}^p(D)(p \neq 2)$.

\end{proof}

For TM system composed by system of trigonometric functions,
the orthogonal TM basis cannot form an unconditional basis for $\mathbb{H}^p(D)(p \neq 2)$.

\section{Unconditional basis and Bernoulli probability measure}

Unconditional basis has closed relation with Bernoulli probability measure.
We consider such relation via Proposition \ref{prop:DB}.
For case (1) and $\{\omega(k)\}_{k\geq 0}\in\{-1, 1\}^{\mathbb{N}}$, let $T_{\omega_{1}}B_{1}^{(1)}=\omega_{1}B_{1}^{(1)}$
and for $j\geq 1, 0\leq k< 2^{j}$ and $m= 2^{j}+k$,
let $T_{m}B_{m}^{(1)}= T_{\omega_{j,k}}B_{j,k}^{(1)}=\omega_{j,k}B_{j,k}^{(1)} =\omega_{m}B_{m}^{(1)}$.
Denote $T_{\omega}f= \omega_{1} \langle f, B_{1}^{(1)}\rangle B_{1}^{(1)} +   \sum\limits_{j\geq1, 0\leq k<2^{j}}\langle f, B_{j,k}^{(1)}\rangle T_{\omega_{j,k}}B_{j,k}^{(1)}$.
Denote $$N_p^{(1)}f=|f_{1}|+ \|(\sum\limits_{j\geq1, 0\leq k<2^j}  |\langle f,B_{j,k}^{(1)}\rangle|^{2}|B_{j,k}^{(1)}|^2)^{\frac{1}{2}}\|_{L^{p}([0,1])}.$$
We apply  Khintchine inequality to consider unconditional basis.

\begin{Theorem} \label{TH:3} For Case (1) and $1<p<\infty$, we have
\begin{equation} \label{eq:14}
\|f\|_{\mathbb{H}^p} \simeq N^{(1)}_pf.
\end{equation}
\end{Theorem}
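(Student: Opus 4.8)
The plan is to exploit the equivalence $\mathbb{H}^p(D) = \mathbb{H}^p_{\partial}(D)$ from Proposition~\ref{prop:DB}, so that everything reduces to an $L^p([0,1])$ estimate for boundary values, and then to run the same Bernoulli/Khintchine argument that was used in the proof of Theorem~\ref{th:1}, but now in both directions. First I would fix $f \in \mathbb{H}^p(D)$, write its boundary expansion $f = f_1 B_1^{(1)} + \sum_{j\geq 1,\,0\leq k<2^j}\langle f, B_{j,k}^{(1)}\rangle B_{j,k}^{(1)}$, and introduce the randomized function $T_\omega f$ together with $N_p(x) = \big(\int_\Omega |T_\omega f(e^{2\pi i x})|^p\, du(\omega)\big)^{1/p}$. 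Applying Lemma~\ref{lemma:2} pointwise in $x$ to the series $S(\omega) = f_1\omega_1 B_1^{(1)} + \sum_{j,k}\langle f,B_{j,k}^{(1)}\rangle\,\omega_{j,k}\,B_{j,k}^{(1)}(e^{2\pi i x})$ (whose coefficients, for fixed $x$, are $\alpha(\lambda) = \langle f,B_\lambda^{(1)}\rangle B_\lambda^{(1)}(e^{2\pi i x})$) gives
\begin{equation*}
C_p'\Big(|f_1|^2 + \sum_{j\geq1,\,0\leq k<2^j}|\langle f,B_{j,k}^{(1)}\rangle|^2\,|B_{j,k}^{(1)}(e^{2\pi ix})|^2\Big)^{1/2} \leq N_p(x) \leq C_p\Big(|f_1|^2 + \sum_{j,k}|\langle f,B_{j,k}^{(1)}\rangle|^2\,|B_{j,k}^{(1)}(e^{2\pi ix})|^2\Big)^{1/2},
\end{equation*}
using $|B_1^{(1)}| = 1$ on the circle. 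Taking the $L^p([0,1])$ norm in $x$, and noting $|f_1|$ can be split off up to constants, yields $\|N_p(\cdot)\|_{L^p} \simeq N_p^{(1)}f$.

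The second ingredient is to connect $\|N_p(\cdot)\|_{L^p}$ back to $\|f\|_{\mathbb{H}^p}$. By Fubini, $\|N_p(\cdot)\|_{L^p}^p = \int_0^1\int_\Omega |T_\omega f(e^{2\pi i x})|^p\, du(\omega)\, dx = \int_\Omega \|T_\omega f\|_{L^p([0,1])}^p\, du(\omega)$. Here I would invoke the unconditional-basis hypothesis in the form that the multiplier operators $T_\omega$ are uniformly bounded on $\mathbb{H}^p(D)$ with norm independent of $\omega$ — this is the standard reformulation of unconditionality of a Schauder basis (the basis $\{B_m^{(1)}\}$ being Schauder by Qian--Chen--Tan \cite{QCT}), and it is exactly the property the theorem is asserting an equivalence with. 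Then $\|T_\omega f\|_{L^p} \simeq \|f\|_{L^p}$ uniformly in $\omega$, so integrating over $\Omega$ gives $\|N_p(\cdot)\|_{L^p} \simeq \|f\|_{L^p([0,1])} \simeq \|f\|_{\mathbb{H}^p(D)}$ by Proposition~\ref{prop:DB}. Chaining the two equivalences yields $\|f\|_{\mathbb{H}^p} \simeq N_p^{(1)}f$, which is \eqref{eq:14}.

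I expect the main subtlety to be the precise logical status of the ``unconditional basis assumption'' inside this theorem: strictly, Theorem~\ref{TH:3} should be read as establishing the square-function equivalence \eqref{eq:14} as a \emph{consequence of} (or simultaneously with) uniform boundedness of the $T_\omega$, rather than presupposing what Theorem~\ref{th:1111} will later conclude. The clean way to handle this is to prove the two-sided estimate $\|T_\omega f\|_{\mathbb{H}^p}\lesssim \|f\|_{\mathbb{H}^p}$ uniformly in $\omega$ first — which in turn will be the real work of the later sections (Sections~5--6), via the Hardy--Littlewood and Fefferman--Stein maximal inequalities of Lemma~\ref{lemma:1} applied to the locality structure of $B_{j,k}^{(1)}$ encoded by the characteristic functions $\chi(2^j x - k)$ — and only then package it as above. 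So the genuine obstacle is not the Khintchine bookkeeping (routine, as in Theorem~\ref{th:1}) but controlling $|B_{j,k}^{(1)}(e^{2\pi i x})|$: showing it behaves like $2^{j/2}\chi(2^j x-k)$ up to a rapidly decaying tail, so that $\big(\sum_{j,k}|\langle f,B_{j,k}^{(1)}\rangle|^2|B_{j,k}^{(1)}|^2\big)^{1/2}$ is comparable to the wavelet-type square function in \eqref{eq:q1}; this estimate on the boundary values of the weighted Blaschke products, with the specific choice \eqref{eq:point} of $a_{j,k}$, is what Section~5 is set up to supply and is the crux of the whole argument.
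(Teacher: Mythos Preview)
Your proposal is correct and follows essentially the same approach as the paper: both arguments assume the uniform boundedness of the sign-multiplier operators $T_\omega$ (equivalently, the unconditional basis property, stated in the paper as conditions (4.1)--(4.2)), then average over $\omega\in\Omega$, apply Fubini, and invoke Khintchine's inequality pointwise in $x$ to pass to the square function $N_p^{(1)}f$. You are in fact more explicit than the paper about the logical status of the argument---that Theorem~\ref{TH:3} really establishes the equivalence \emph{assuming} unconditionality, with the actual boundedness of $T_\omega$ deferred to the maximal-function estimates of Sections~5--6---whereas the paper's presentation leaves this dependency somewhat implicit.
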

\begin{proof}
According to Proposition \ref{prop:DB},
the fact that $B_{j,k}^{(1)}$ is an unconditional basis for $\mathbb{H}^p(D)$
is equivalent to the following two conditions:

\begin{equation}\label{4.1}
\|T_{\omega}f|_{\partial D}\|_{L^p} \leq \|f|_{\partial D}\|_{L^p}, \forall \omega\in \{-1, 1\}^{\mathbb{N}}.
\end{equation}
\begin{equation}\label{4.2}
\|f|_{\partial D}\|_{L^p} \leq \|T_{\omega}f|_{\partial D}\|_{L^p}, \forall \omega\in \{-1, 1\}^{\mathbb{N}}.
\end{equation}

(i) We raise both sides of Eq. \eqref{4.1} to the $p$ power
$$\begin{array}{rcl} \|\{\omega_{1} \langle f, B_{1}^{(1)}\rangle B_{1}^{(1)} + \sum\limits_{j\geq1, 0\leq k<2^{j}}
\langle f, B_{j,k}^{(1)}\rangle \omega_{j,k}B_{j,k}^{(1)}\} |_{\partial D } \|_{L^p}^p
&\leq&\|f |_{\partial D } \|_{L^p}^p.
\end{array}$$

Then average the $\omega\in\Omega$ on the left side, we get
$$\begin{array}{rcl}
\int_{\Omega}\int_{[0,1]}\{\omega_{1} \langle f, B_{1}^{(1)}\rangle B_{1}^{(1)}
+ \sum\limits_{j\geq1, 0\leq k<2^{j}} \omega_{j,k} \langle f, B_{j,k}^{(1)}\rangle B_{j,k}^{(1)} \}^{p}dxdu(\omega)
&\leq&\|f |_{\partial D } \|_{L^p}^p.
\end{array}$$

Thus, we get the double integral with the measure  $dxdu(\omega)$  over the interval $[0,1]\times \Omega$.
We use Fubili theorem,
$$\begin{array}{rcl}
\int_{[0,1]} \int_{\Omega} \{\omega_{1} \langle f, B_{1}^{(1)}\rangle B_{1}^{(1)}
+ \sum\limits_{j\geq1, 0\leq k<2^{j}} \omega_{j,k} \langle f, B_{j,k}^{(1)}\rangle B_{j,k}^{(1)} \}^{p}dxdu(\omega)
&\leq&\|f |_{\partial D } \|_{L^p}^p.
\end{array}$$

By right side of Khintchine inequality,
$$\begin{array}{rcl}
&&\int_{[0,1]}\int_{\Omega} |\omega_{1} \langle f, B_{1}^{(1)}\rangle B_{1}^{(1)} +
\sum\limits_{j\geq1, 0\leq k<2^{j}}\langle f, B_{j,k}^{(1)}\rangle \omega_{j,k}B_{j,k}^{(1)}|^{p}du(\omega)dx\\
&\leq& C\int_{[0,1]}(|\langle f, B_{1}^{(1)}\rangle|^{2} +\sum\limits_{j\geq1, 0\leq k<2^{j}}|\langle f, B_{j,k}^{(1)}\rangle B_{j,k}^{(1)}|^2)^{\frac{p}{2}}dx\\
&=& C N^{(1)}_pf\\
&\leq & \|f |_{\partial D }\|_p^p.
\end{array}$$

That is to say, we have $(N_p^{(1)})^p \leq c\|f\|_p^p$.

(ii) We raise both sides of Eq. \eqref{4.2} to the $p$ power,
$$\begin{array}{rcl}
\|f |_{\partial D } \|_{L^p}^p
\leq
\|\{ \omega_{1}\langle f, B_{1}^{(1)}\rangle B_{1}^{(1)} + \sum\limits_{j\geq1, 0\leq k<2^{j}}
\omega_{j,k} \langle f, B_{j,k}^{(1)}\rangle B_{j,k}^{(1)}\} |_{\partial D } \|_{L^p}^p.
\end{array}$$

Then average $\omega\in\Omega$ on the right side,
we get the double integral with the measure  $dxdu(\omega)$  over the interval $[0,1]\times \Omega$.
$$\begin{array}{rcl}
\|f |_{\partial D } \|_{L^p}^p
\leq\int_{\Omega}\int_{[0,1]}\{ \omega_{1}\langle f, B_{1}^{(1)}\rangle B_{1}^{(1)}
+ \sum\limits_{j\geq1, 0\leq k<2^{j}}\omega_{j, k} \langle f, B_{j,k}^{(1)}\rangle B_{j,k}^{(1)} \}^{p}dxdu(\omega).
\end{array}$$

We switch the order of integration with respect to $x$ and $\omega$.
By Fubili theorem and get
$$\begin{array}{rcl}
\|f |_{\partial D } \|_{L^p}^p \leq
\int_{[0,1]}\int_{\Omega}\{\omega_{1} \langle f, B_{1}^{(1)}\rangle B_{1}^{(1)}
+ \sum\limits_{j\geq1, 0\leq k<2^{j}} \omega_{j, k}\langle f, B_{j,k}^{(1)}\rangle B_{j,k}^{(1)} \}^{p}dxdu(\omega)
\end{array}$$

By the left side of Khintchine inequality, we get
$$\begin{array}{rcl}
&&\int_{[0,1]}\int_{\Omega}\{\omega_{1} \langle f, B_{1}^{(1)}\rangle B_{1}^{(1)}
+ \sum\limits_{j\geq1, 0\leq k<2^{j}}\omega_{j,k} \langle f, B_{j,k}^{(1)}\rangle B_{j,k}^{(1)} \}^{p}dxdu(\omega)\\
&\geq&
C\int_{[0,1]}(|\langle f, B_{1}^{(1)}\rangle|^{2} +\sum\limits_{j\geq1, 0\leq k<2^{j}}|\langle f, B_{j,k}^{(1)}\rangle B_{j,k}^{(1)}|^2)^{\frac{p}{2}}dx\\
&=&c(N_p^{(1)})^p\\
&\geq&\|f|_{\partial  D}\|_p^p.
\end{array}$$

That is to say, we have
$$ \|f\|_p^p\leq c(N_p^{(1)})^p.$$
Hence \eqref{eq:14} holds.
\end{proof}

For case (2), let $T_{\omega_{1}}B_{1}^{(2)}=\omega_{1}B_{1}^{(2)}$
and $T_{\omega_{2}}B_{2}^{(2)}=\omega_{2}B_{2}^{(2)}$.
For $j\geq 1, 0\leq k < 2^{j-1}$  and $m= 2^{j-1}+k+2$,
let $T_{\omega_{j,k}}B_{j,k}^{(2)}=T_{\omega_{m}}B_{m}^{(2)}=\omega_{m}B_{m}^{(2)}=\omega_{j,k}B_{j,k}^{(2)}$.
We have $$T_{\omega}f=\omega_{1} \langle f, B_{1}^{(2)}\rangle B_{1}^{(2)} + \omega_{2} \langle f, B_{2}^{(2)}\rangle B_{2}^{(2)} +
\sum\limits_{j\geq1, 0\leq k<2^{j-1}}\langle f, B_{j,k}^{(2)}\rangle T_{\omega_{j,k}}B_{j,k}^{(2)}.$$
Denote
$$N_p^{(2)}=|f_{1}|+ |f_{2}|+ \|(\sum\limits_{j\geq1, 0\leq k<2^{j-1}}  |\langle f,B_{j,k}^{(2)}\rangle|^{2}|B_{j,k}^{(2)}|^2)^{\frac{1}{2}}\|_{L^{p}([0,1])}.$$
Similar to Case (1), we apply  Khintchine inequality to prove that $B_{j,k}^{(2)}$ is also an unconditional basis.

\begin{Theorem} \label{TH:4} For Case (2) and $1<p<\infty$, we have
\begin{equation} \label{eq:th2}
\|f\|_{\mathbb{H}^p} \simeq \|N^{(2)}_pf\|_{L^p}.
\end{equation}
\end{Theorem}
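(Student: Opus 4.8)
The plan is to mirror exactly the argument used for Theorem \ref{TH:3}, since Case (2) differs from Case (1) only by carrying two "low-frequency" coefficients $f_1, f_2$ instead of one. First I would invoke Proposition \ref{prop:DB} to replace $\|f\|_{\mathbb{H}^p(D)}$ by $\|f|_{\partial D}\|_{L^p}$, so that the whole argument takes place on the unit circle. Then I would record that, by the general theory of unconditional bases, the statement that $\{B_1^{(2)}, B_2^{(2)}, B_{j,k}^{(2)}\}$ is an unconditional basis for $\mathbb{H}^p$ is equivalent to the two-sided uniform bound
\begin{equation}\notag
\|T_\omega f|_{\partial D}\|_{L^p} \simeq \|f|_{\partial D}\|_{L^p}, \qquad \forall\, \omega \in \{-1,1\}^{\mathbb{N}},
\end{equation}
with constants independent of $\omega$, where $T_\omega f = \omega_1\langle f,B_1^{(2)}\rangle B_1^{(2)} + \omega_2\langle f,B_2^{(2)}\rangle B_2^{(2)} + \sum_{j\geq1,\,0\le k<2^{j-1}}\omega_{j,k}\langle f,B_{j,k}^{(2)}\rangle B_{j,k}^{(2)}$.

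Next I would prove the two inclusions separately, exactly as in the proof of Theorem \ref{TH:3}. For the upper bound: raise $\|T_\omega f|_{\partial D}\|_{L^p}\le \|f|_{\partial D}\|_{L^p}$ to the $p$th power, integrate over $\omega\in\Omega$ against the Bernoulli measure $du(\omega)$, apply Fubini to move the $\omega$-integral inside the $x$-integral, and then apply the right-hand side of the Khintchine inequality (Lemma \ref{lemma:2}) pointwise in $x$ to the random sum $\omega_1\langle f,B_1^{(2)}\rangle B_1^{(2)} + \omega_2\langle f,B_2^{(2)}\rangle B_2^{(2)} + \sum_{j,k}\omega_{j,k}\langle f,B_{j,k}^{(2)}\rangle B_{j,k}^{(2)}$, whose coefficient sequence is indexed by $\{1,2\}\cup\{(j,k)\}$. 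Since $|B_1^{(2)}|=|B_2^{(2)}|=1$ on the boundary, the square function on the right of Khintchine is precisely $(|\langle f,B_1^{(2)}\rangle|^2 + |\langle f,B_2^{(2)}\rangle|^2 + \sum_{j,k}|\langle f,B_{j,k}^{(2)}\rangle B_{j,k}^{(2)}|^2)^{1/2}$, whose $L^p$ norm is comparable to $N_p^{(2)}f$; this gives $(N_p^{(2)}f)^p \le c\|f\|_p^p$. For the lower bound one runs the same computation starting from $\|f|_{\partial D}\|_{L^p}\le\|T_\omega f|_{\partial D}\|_{L^p}$ and using the left-hand Khintchine inequality, obtaining $\|f\|_p^p \le c(N_p^{(2)}f)^p$. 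Combining the two gives \eqref{eq:th2}.

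I do not expect a genuine obstacle here; the proof is essentially a transcription of the Case (1) argument with the bookkeeping adjusted for the extra index. The only points that warrant a line of care are: (a) checking that the two low-frequency terms $B_1^{(2)}=1$ and $B_2^{(2)}=z$ indeed have unit modulus on $\partial D$, so they enter the Khintchine square function as bare $|f_1|^2+|f_2|^2$ and match the definition of $N_p^{(2)}f$; and (b) noting that the statement \eqref{eq:th2} is written with an outer $\|\cdot\|_{L^p}$ around $N_p^{(2)}f$, which is slightly redundant since $N_p^{(2)}f$ already contains its own $L^p$ norm and the $|f_1|,|f_2|$ terms are scalars — so one reads $\|N_p^{(2)}f\|_{L^p}$ simply as $N_p^{(2)}f$, matching \eqref{eq:q2} in Theorem \ref{th:1111}. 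Everything else — Fubini's theorem, the uniformity of the Khintchine constants $C_p, C_p'$, and the equivalence of unconditionality with the two-sided $T_\omega$ bound — is supplied verbatim by the tools already set up in Sections 2 and 4.
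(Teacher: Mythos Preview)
Your proposal is correct and follows essentially the same approach as the paper: the paper's proof of Theorem~\ref{TH:4} simply invokes Proposition~\ref{prop:DB}, records the equivalence of unconditionality with the two-sided $T_\omega$ bounds \eqref{4.3}--\eqref{4.4}, and then says ``similar to Case~(1)'' to transfer the Khintchine/Fubini argument verbatim. You have in fact supplied more detail than the paper does, including the observation that $|B_1^{(2)}|=|B_2^{(2)}|=1$ on $\partial D$ and the remark about the redundant outer $\|\cdot\|_{L^p}$ in the statement of \eqref{eq:th2}.
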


\begin{proof}
According to Proposition \ref{prop:DB},
the fact that $B_{j,k}^{(2)}$ is an unconditional basis for $\mathbb{H}^p(D)$ is equivalent to the following two expressions:
\begin{equation}\label{4.3}
\|T_{\omega}f|_{\partial D}\|_{L^p} \leq \|f|_{\partial D}\|_{L^p}, \forall \omega\in \{-1, 1\}^{\mathbb{N}}.
\end{equation}
\begin{equation}\label{4.4}
\|f|_{\partial D}\|_{L^p} \leq \|T_{\omega}f|_{\partial D}\|_{L^p}, \forall \omega\in \{-1, 1\}^{\mathbb{N}}.
\end{equation}

Similar to the case (1), we can use the same method to prove that Theorem \ref{TH:4} holds.
Eq. \eqref{4.3} is similar to Eq. \eqref{4.1}.
Applying similar proof, we obtain
$$\|T_{\omega}f^{(2)}|_{\partial D}\|_{L^p}^p\leq  (N_p^{(1)})^p \leq c\|f\|_p^p.$$
Eq. \eqref{4.4} is similar to Eq. \eqref{4.2}. By similar proof, we obtain
$$ \|f\|_p^p\leq c\|T_{\omega}f^{(2)}|_{\partial D}\|_{L^p}^p\leq c(N_p^{(2)})^p.$$
Hence Eq. \eqref{eq:th2} holds.

\end{proof}

\section{Some properties of $B_{j,k}(e^{2\pi i x})$}

On the boundary of unit disk $D$,  we have $z=e^{2\pi i x}$.
\begin{Lemma}\label{pro:3}
The following two properties hold for $B_{j,k}(e^{2\pi i x})$.\\
(i) There exist a positive constant $c$ such that $\forall j\geq 1, 0\leq k< 2^{j}, x\in [2^{-j}k, 2^{-j}(k+1)]$,
\begin{equation}
|B_{j,k}(e^{2\pi i x})|^2 \geq c2^j\chi(2^jx-k).
\end{equation}
(ii) Further, there exist a positive constant $C$ such that, for $j\geq1$,
\begin{equation}
 \sum_{0\leq k<2^j}|B_{j,k}(e^{2\pi i x})|^2  \leq C2^{j}.
\end{equation}
\end{Lemma}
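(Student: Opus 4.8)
The plan is to work directly with the explicit product formula for $B_{j,k}$ on the boundary. Recall that for $m = 2^j + k$ (Case (1)),
$$B_{j,k}(e^{2\pi i x}) = e_{a_{j,k}}(e^{2\pi i x}) \prod_{l=2}^{m-1} \frac{e^{2\pi i x} - a_l}{1 - \bar a_l e^{2\pi i x}},$$
where each Blaschke factor $\frac{e^{2\pi i x} - a_l}{1 - \bar a_l e^{2\pi i x}}$ has modulus exactly $1$ on $|z|=1$. Hence $|B_{j,k}(e^{2\pi i x})| = |e_{a_{j,k}}(e^{2\pi i x})|$ pointwise on the circle, which reduces both (i) and (ii) to estimating the single Szeg\"o kernel factor. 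With $a_{j,k} = r_j e^{2\pi i h_{j,k}}$, $r_j = \sqrt{1-2^{-j}}$, $h_{j,k} = 2^{-j}k$, one computes
$$|e_{a_{j,k}}(e^{2\pi i x})|^2 = \frac{1 - r_j^2}{|1 - r_j e^{2\pi i(x - h_{j,k})}|^2} = \frac{2^{-j}}{(1-r_j)^2 + 2r_j(1-\cos 2\pi(x-h_{j,k}))}.$$
This is (a dilate of) the Poisson kernel, so the whole lemma is a statement about Poisson-kernel localization.

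For part (i): when $x \in [2^{-j}k, 2^{-j}(k+1)]$ we have $|x - h_{j,k}| \le 2^{-j}$, so $1 - \cos 2\pi(x-h_{j,k}) \lesssim 2^{-2j}$ and also $(1-r_j)^2 \le (1-r_j^2)^2 = 2^{-2j}$; the denominator is therefore $O(2^{-2j})$, giving $|e_{a_{j,k}}(e^{2\pi i x})|^2 \ge \frac{2^{-j}}{C 2^{-2j}} = c\, 2^j$, which is exactly the claimed bound on that interval. For part (ii), I would bound $\sum_{0\le k<2^j} |e_{a_{j,k}}(e^{2\pi i x})|^2$ by comparing it to an integral: the points $h_{j,k}=2^{-j}k$ are $2^{-j}$-separated, and
$$|e_{a_{j,k}}(e^{2\pi i x})|^2 \le \frac{2^{-j}}{2^{-2j} + c\,\mathrm{dist}(x,h_{j,k})^2} = 2^j \cdot \frac{1}{1 + c\,(2^j \mathrm{dist}(x,h_{j,k}))^2},$$
using $1-\cos\theta \gtrsim \theta^2$ for $|\theta|\le \pi$ together with periodicity in $x$. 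Summing the geometric-like tail $\sum_{k} \frac{1}{1+c(k - 2^j x + \text{integer})^2}$ over $k$ gives a bounded constant independent of $j$ and $x$, hence $\sum_{0\le k<2^j}|B_{j,k}(e^{2\pi i x})|^2 \le C 2^j$.

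The main obstacle is bookkeeping rather than conceptual: one must handle the periodicity of $x \mapsto \mathrm{dist}(x, h_{j,k})$ on $[0,1]/\mathbb{Z}$ carefully so that the comparison sum $\sum_k (1 + c(2^j x - k)^2)^{-1}$ is genuinely uniformly bounded (this is where the $2^{-j}$-separation of the $h_{j,k}$ and the choice $r_j = \sqrt{1-2^{-j}}$ enter crucially — a different rate of $r_j \to 1$ would break the matching of the two scales $1-r_j$ and $2^{-j}$). A secondary point is to verify that in Case (2), where $m = 2^{j-1}+k+2$ and $0 \le k < 2^{j-1}$, the identical argument applies verbatim since the extra unimodular Blaschke factors again do not affect $|B_{j,k}|$ on the boundary; I would simply remark that the proof is the same. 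One should also double-check the lower bound in (i) does not require the Blaschke product to be bounded below away from zero — it does not, precisely because $|{\rm Blaschke}| \equiv 1$ on $\partial D$.
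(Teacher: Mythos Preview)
Your proposal is correct and follows essentially the same route as the paper: reduce $|B_{j,k}|$ on $\partial D$ to the single Szeg\"o factor by using that the Blaschke product is unimodular there, then expand $|1-r_j e^{2\pi i(x-h_{j,k})}|^2=(1-r_j)^2+2r_j(1-\cos 2\pi(x-h_{j,k}))$ and use $1-\cos\theta\asymp\theta^2$ to get the lower bound in (i) and the decay $|B_{j,k}|^2\lesssim 2^j(1+(2^jx-k)^2)^{-1}$ that is summed in (ii). Your treatment is in fact slightly more careful than the paper's about the periodic distance and about the fact that $(1-r_j)^2\le(1-r_j^2)^2=2^{-2j}$ rather than an equality.
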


\begin{proof}
On the boundary of disk $D$,   we have $|e^{2\pi i x}|^2 = 1$.
Hence
$$\prod\limits_{l=2}^{m}|\frac{e^{2\pi i x}-a_l}{1-\bar{a}_le^{2\pi i x}}|^2= \prod\limits_{l=2}^{m}|\frac{e^{2\pi i x}-a_l}{\overline{e^{2\pi i x}}-\bar{a}_l}|^2|\overline{e^{2\pi i x}}|^2=1.$$

(i) Given  $j\geq1$, $ 0\leq k<2^{j}$ and $m=2^{j}+k$.
For $ x\in [2^{-j}k, 2^{-j}(k+1)]$, we have $|e^{2\pi i(x-h_{j,k})}|= 1$.
Hence
$$\begin{array}{rcl}
|B_{j,k}^{(1)}(e^{2\pi i x})|^2&=&
| \frac{\sqrt{1-|a_{j,k}|^{2}}e^{2\pi i x}}{1-\bar{a}_{j,k} e^{2\pi i x}} \prod\limits_{l=2}^{m}\frac{e^{2\pi i x}-a_l}{1-\bar{a}_le^{2\pi i x}}|^2\\
&=&
| \frac{(1-|a_{j,k}|^{2})|e^{2\pi i x}|^2}{(1-\bar{a}_{j,k} e^{2\pi i x})^2}| \prod\limits_{l=2}^{m}|\frac{e^{2\pi i x}-a_l}{1-\bar{a}_le^{2\pi i x}}|^2\\
&=&
\frac{|1-r_j^2|}{|1-r_je^{2\pi i(x-h_{j,k})}|^2} .
\end{array}$$
For $t\rightarrow0$, $1-\cos t\sim \frac {t^2}{2}$, we have
$$\begin{array}{rcl} \frac{1}{|1-r_{j} e^{2\pi i(x-h_{j,k})}|}
&=& \frac{1}{\sqrt{(1-r_{j}\cos(2\pi (x-h_{j,k})))^2 +(r_{j} \sin(2\pi (x-h_{j,k})))^2}}\\
&=&\frac{1}{\sqrt{(1-r_{j})^2 +2r_{j}(1 -\cos(2\pi (x-h_{j,k})))}}\\
&=&\frac{1}{\sqrt{( 2^{-j})^2 +4\pi^2r_{j} (x-h_{j,k})^2}}\\
&=&\frac{1}{2^{-j}\sqrt{1+4\pi^2r_{j}(2^jx-k)^2}}\\
&\geq&c2^{j}.
\end{array}$$

We know $|1-r_{j}^2 |= 1- (1- 2^{-j})= 2^{-j}$, hence
$$\begin{array}{rcl}
|B_{j,k}^{(1)}(e^{2\pi i x})|^2&=&
\frac{|1-r_j^2 |}{|1-r_je^{2\pi i(x-h_{j,k})}|^2}  |z|^2   \prod\limits_{l=2}^{m}|\frac{z-a_l}{1-\bar{a}_lz}|^2\\
&\geq&c2^{j}.
\end{array}$$

(ii)
For $j\geq1$ , $ 0\leq k<2^{j}$ and $m=2^{j}+k$, we have

$$\begin{array}{rcl}
\sum\limits_{0\leq k<2^j}|B_{j,k}^{(1)}(e^{2\pi i x})|^2
&=&
\sum\limits_{0\leq k<2^j}\frac{|1-r_j^2||e^{2\pi i x}|^2}{|1-r_je^{2\pi i(x-h_{j,k})}|^2}     \prod\limits_{l=2}^{m}|\frac{e^{2\pi i x}-a_l}{1-\bar{a}_le^{2\pi i x}}|^2\\
&=&
\sum\limits_{0\leq k<2^j}\frac{|1-r_j^2|}{|1-r_je^{2\pi i(x-h_{j,k})}|^2}.
\end{array}$$

Further, we have
$$\begin{array}{rcl} \frac{1}{|1-r_{j} e^{2\pi i (x-h_{j,k})}|^2}
&=& \frac{1}{|(1-r_{j}\cos(2\pi (x-h_{j,k})))^2 +(r_{j} \sin(2\pi (x-h_{j,k})))^2|}\\
&=&\frac{1}{|(1-r_{j})^2 +2r_{j}(1 -\cos(2\pi (x-h_{j,k})))|}\\
&\leq &\frac{1}{|(2^{-j})^2 +4\pi^2r_{j} (x-h_{j,k})^2|}\\
&\leq & C 2^{2j}\frac{1}{1+(2^{j}x-k)^2}.
\end{array}$$

Hence
$$\begin{array}{c}
\sum\limits_{0\leq k<2^j}|B_{j,k}^{(1)}(e^{2\pi i x})|^2
\leq  C2^j\sum\limits_{0\leq k<2^j}\frac{1}{1+(2^{j}x-k)^2} \leq C 2^j.
\end{array}$$

\end{proof}

\section{Proof of Main Theorem \ref{th:1111}}

For case (1), let $$f_j=\sum_{0 \leq k < 2^j} 2^{\frac{j}{2}}|\langle f,B_{j,k}\rangle|\chi(2^j x-k),
bf_j=\sum_{0 \leq k < 2^j} |\langle f,B_{j,k}\rangle|^2 |B_{j,k}|^2.$$
For case (2), let $$f_j=\sum_{0 \leq k < 2^{j-1}} 2^{\frac{j}{2}}|\langle f,B_{j,k}\rangle|\chi(2^j x-k),
bf_j=\sum_{0 \leq k < 2^{j-1}} |\langle f,B_{j,k}\rangle|^2 |B_{j,k}|^2.$$
In both cases, denote $bf=(\sum_{j \geq 1} bf_j)^{\frac{1}{2}}$ and denote $Sf=(\sum_{j \geq 1} |f_j|^2)^{\frac{1}{2}}$.

Now, we come to prove Main theorem \ref{th:1111}.

\begin{proof}

(i) $\forall x \in [2^{-j}k, 2^{-j}(k+1)]$, by (i) of Lemma \ref{pro:3}, we have $|B_{j,k}|^2 \geq c 2^j \chi(2^j x-k)$.
Further, by applying Theorem \ref{TH:3}, we have
\begin{equation}
\|f\|_{H^p_{(1)}} \lesssim  N_p^{(1)} \lesssim \|f\|_p.
\end{equation}
By applying Theorem \ref{TH:4}, we have
\begin{equation}
\|f\|_{H^p_{(2)}} \lesssim N_p^{(2)} \lesssim \|f\|_p.
\end{equation}
(ii)
From the property of maximal function, we have
$\forall x \in [2^{-j}k, 2^{-j}(k+1)]$, $2^\frac{j}{2}|\langle f, B_{j,k}\rangle| \leq Mf_j.$
Thus,
$$\sum_{0 \leq k < 2^j}|\langle f, B_{j,k}\rangle|^2|B_{j,k}|^2 \leq
c 2^{-j} (Mf_j)^2 \sum_{0 \leq k < 2^j}|B_{j,k}|^2 \leq c|Mf_j|^2.$$
Hence, for case (1), we have
\begin{equation} \label{meq:4.1}
N_p^{(1)} \lesssim |f_1|+\|(\sum_{j \geq 1}(Mf_j)^2)^{\frac{1}{2}}\|_{L^p}.
\end{equation}
According to the Fefferman-Stein vector maximum theorem, for $1<p<\infty$,
the right-hand side of equation \eqref{meq:4.1} is less than $ |f_1|+\|(\sum_{j \geq 1}(f_j)^2)^{\frac{1}{2}}\|_{L^p}$.
Thus,
$$N_p^{(1)} \leq c\|f\|_{H^p_{(1)}}.$$

For case (2), we have
\begin{equation} \label{meq:4.2}
N_p^{(2)}\lesssim |f_{1}|+ |f_{2}|+ \|(\sum_{j \geq 1}(Mf_j)^2)^{\frac{1}{2}}\|_{L^p}.
\end{equation}

According to the Fefferman-Stein vector maximum theorem, for $1<p<\infty$,
the right-hand side of equation \eqref{meq:4.2} is less than $ |f_1|+|f_2|+\|(\sum_{j \geq 1}(f_j)^2)^{\frac{1}{2}}\|_{L^p}$.
Thus,
$$N_p^{(2)} \leq c\|f\|_{H^p_{(2)}}.$$

\end{proof}

\textbf{Acknowledgements.
This project is partially supported by
research grant of Macau University of Science and Technology (FRG-22-075-MCMS),
Macau Government Research Funding (FDCT0128/2022/A),
Science and Technology Development Fund of Macau SAR (005/2022/ALC),
Science and Technology Development Fund of Macau SAR (0045/2021/A),
Macau University of Science and Technology (FRG-20-021-MISE).
}

\bigskip
\noindent Haibo Yang

%\medskip
%\noindent
%Mathematics, Faculty of Science \& Technology, \\
%University of Macau.

\medskip
\noindent
Macau Institute of Systems Engineering, \\
Macau University of Science and Technology, Macau, 999078, China.\\
\smallskip
\noindent{\it E-mail address}:
\texttt{yanghb97@qq.com}

\bigskip
\noindent Chitin Hon

\medskip
\noindent
Macau Institute of Systems Engineering, \\
Macau University of Science and Technology, Macau, 999078, China.

\smallskip
\noindent{\it E-mail address}:
\texttt{cthon@must.edu.mo}

\bigskip
\noindent Qixiang Yang

\medskip
\noindent
School of Mathematics and Statistics, Wuhan University,
Wuhan, 430072 China.

\smallskip
\noindent{\it E-mail address}:
\texttt{qxyang@whu.edu.cn}

\bigskip
\noindent Tao Qian

\medskip
\noindent
Director of Macau Centre of of Mathematical Sciences, \\
Faculty of Innovation Engineering,\\
Macau University of Science and Technology

\smallskip
\noindent{\it E-mail address}:
\texttt{tqian1958@gmail.com}

\end{document}